\date{\today}
\newtheorem{theorem}{Теорема}[section]
\newtheorem{proposition}[theorem]{Твердження}
\newtheorem{corollary}[theorem]{Наслiдок}
\newtheorem{lemma}[theorem]{Лема}
\theoremstyle{definition}
\newtheorem{remark}[theorem]{Зауваження}
\begin{document}

\title[Про гомоморфiзми бiциклiчних розширень архiмедових лiнiйно впорядкованих груп]{Про гомоморфiзми бiциклiчних розширень архiмедових лiнiйно впорядкованих груп}

\author[Олег~Гутік, Оксана Прохоренкова]{Олег~Гутік, Оксана Прохоренкова}
\address{Львівський національний університет ім. Івана Франка, Університецька 1, Львів, 79000, Україна}
\email{oleg.gutik@lnu.edu.ua, okcana.proxorenkova@gmail.com
}

\keywords{Напівгрупа, біциклічний моноїд, біциклічне розширення, лінійно впорядкована група, гомомофізм, $o$-гомомофізм, ізоморфізм, категорія, ізоморфні категорії.}

\subjclass[2020]{20M15,  20M50, 18B40.}

\begin{abstract}
Нехай $\mathscr{B}^+(G)$~--- біциклічне розширення лінійно впорядкованої групи $G$, означене в \cite{Gutik-Pagon-Pavlyk-2011}. Ми доводимо, якщо $G$ i $H$~--- архімедові лінійно впорядковані групи, то кожний $o$-гомоморфізм $\widehat{\varphi}\colon G\to H$ породжує гомоморфізм моноїдів $\widetilde{\varphi}\colon \mathscr{B}^+(G)\to \mathscr{B}^+(H)$, і кожний гомоморфізм моноїдів $\widetilde{\varphi}\colon \mathscr{B}^+(G)\to \mathscr{B}^+(H)$ породжує $o$-гомоморфізм $\widehat{\varphi}\colon G\to H$.

\bigskip
\noindent
\emph{Oleg Gutik, Oksana Prokhorenkova, \textbf{On homomorphisms of bicyclic extensions of archimedean totally ordered groups}.}

\smallskip
\noindent
Let $\mathscr{B}^+(G)$ be the bicyclic extension of a totally ordered group $G$ which is defined in \cite{Gutik-Pagon-Pavlyk-2011}.
We show that if $G$ and $H$ are archimedean totally ordered groups then every $o$-homomorphism $\widehat{\varphi}\colon G\to H$ generates a monoid homomorphim $\widetilde{\varphi}\colon \mathscr{B}^+(G)\to \mathscr{B}^+(H)$, and every monoid homomorphism $\widetilde{\varphi}\colon \mathscr{B}^+(G)\to \mathscr{B}^+(H)$ generates $o$-homomorphism $\widehat{\varphi}\colon G\to H$.
\end{abstract}

\maketitle


\section{\textbf{Вступ, означення та мотивація досліджень}}\label{section-1}

У цій праці ми користуватимемося термінологією з \cite{Clifford-Preston-1961, Clifford-Preston-1967, Darnel-1995, Fuchs1963, Lawson-1998, Petrich1984}.
Надалі у тексті множину невід'ємних цілих чисел  позначатимемо через $\omega$, а адитивну групу цілих чисел зі звичайним на ній лінійним порядком --- через $\mathbb{R}$. Також, якщо $\le$~--- передпорядок на множині $X$, то через $\le^*$ будемо позначати дуальний передпорядок на $X$, тобто $a\le^*b$ тоді і лише тоді, коли $b\le a$, $a, b\in X$. Очевидно, що дуальний порядок до лінійного порядку знову є лінійним.

Якщо $S$~--- напівгрупа, то її підмножина ідемпотентів позначається через $E(S)$.  На\-пів\-гру\-па $S$ називається \emph{інверсною}, якщо для довільного її елемента $x$ існує єдиний елемент $x^{-1}\in S$ такий, що $xx^{-1}x=x$ та $x^{-1}xx^{-1}=x^{-1}$ \cite{Vagner-1952, Petrich1984}. В інверсній напівгрупі $S$ вище означений елемент $x^{-1}$ називається \emph{інверсним до} $x$. \emph{В'язка}~--- це напівгрупа ідемпотентів, а \emph{напівґратка}~--- це комутативна в'язка.

Якщо $S$ --- напівгрупа, то ми позначатимемо відношення Ґріна на $S$ через $\mathscr{R}$, $\mathscr{L}$, $\mathscr{D}$, $\mathscr{H}$ і $\mathscr{J}$:
\begin{align*}
    &\qquad a\mathscr{R}b \mbox{ тоді і лише тоді, коли } aS^1=bS^1;\\
    &\qquad a\mathscr{L}b \mbox{ тоді і лише тоді, коли } S^1a=S^1b;\\
    &\qquad a\mathscr{J}b \mbox{ тоді і лише тоді, коли } S^1aS^1=S^1bS^1;\\
    &\qquad \mathscr{D}=\mathscr{L}\circ\mathscr{R}=\mathscr{R}\circ\mathscr{L};\\
    &\qquad \mathscr{H}=\mathscr{L}\cap\mathscr{R}.
\end{align*}
(див. означення в \cite[\S2.1]{Clifford-Preston-1961} або \cite{Green-1951}). Напівгрупа $S$ називається \emph{простою}, якщо $S$ не містить власних двобічних ідеалів, тобто $S$ складається з одного $\mathscr{J}$-класу, і \emph{біпростою}, якщо $S$ складається з одного $\mathscr{D}$-класу. \emph{Передпорядки Ґріна} $\le_{\mathscr{L}}$ i $\le_{\mathscr{R}}$ на напівгрупі $S$ визначаються так:
\begin{align*}
  a\le_{\mathscr{L}}b & \hbox{~тоді і лише тоді, коли~} S^1a\subseteq S^1b;\\
  a\le_{\mathscr{R}}b & \hbox{~тоді і лише тоді, коли~} aS^1\subseteq bS^1,
\end{align*}
де $a,b\in S$ \cite{Grille-1995}.

\smallskip

Відношення еквівалентності $\mathfrak{K}$ на напівгрупі $S$ називається \emph{конгруенцією}, якщо для елементів $a$ та $b$ напівгрупи $S$ з того, що виконується умова $(a,b)\in\mathfrak{K}$ випливає, що $(ca,cb), (ad,bd) \in\mathfrak{K}$, для довільних $c,d\in S$. Відношення $(a,b)\in\mathfrak{K}$ ми також будемо записувати $a\mathfrak{K}b$, і в цьому випадку будемо говорити, що \emph{елементи $a$ i $b$ є $\mathfrak{K}$-еквівалентними}.

\smallskip

Якщо $S$~--- напівгрупа, то на $E(S)$ визначено частковий порядок:
$
e\preccurlyeq f
$   тоді і лише тоді, коли
$ef=fe=e$.
Так означений частковий порядок на $E(S)$ називається \emph{при\-род\-ним}.

\smallskip

Означимо відношення $\preccurlyeq$ на інверсній напівгрупі $S$ так:
$
    s\preccurlyeq t
$
тоді і лише тоді, коли $s=te$.
для деякого ідемпотента $e\in S$. Так означений частковий порядок назива\-єть\-ся \emph{при\-род\-ним част\-ковим порядком} на інверсній напівгрупі $S$~\cite{Vagner-1952}. Очевидно, що звуження природного часткового порядку $\preccurlyeq$ на інверсній напівгрупі $S$ на її в'язку $E(S)$ є при\-род\-ним частковим порядком на $E(S)$.

\smallskip

Нагадаємо (див.  \cite[\S1.12]{Clifford-Preston-1961}, що \emph{біциклічною напівгрупою} (або \emph{біциклічним моноїдом}) ${\mathscr{C}}(p,q)$ називається напівгрупа з одиницею, породжена двоелементною мно\-жи\-ною $\{p,q\}$ і визначена одним  співвідношенням $pq=1$. Біциклічна на\-пів\-група відіграє важливу роль у теорії
на\-півгруп. Так, зокрема, класична теорема О.~Ан\-дерсена \cite{Andersen-1952}  стверджує, що {($0$-)}прос\-та напівгрупа з (ненульовим) ідем\-по\-тен\-том є цілком {($0$-)}прос\-тою тоді і лише тоді, коли вона не містить ізоморфну копію бі\-циклічного моноїда. У \cite{Gutik-Prokhorenkova-Sekh-2021} описано структуру напівгрупи ендоморфізмів біциклічної напівгрупи та розширеної біциклічної напівгрупи.
Різні розширення біциклічного моноїда вводилися раніше різ\-ни\-ми авторами \cite{Fortunatov-1976, Fotedar-1974, Fotedar-1978, Warne-1967}. Такими є, зокрема, конструкції Брука та Брука--Рейлі занурення напівгруп у прості та описання інверсних біпростих і $0$-біпростих $\omega$-напівгруп \cite{Bruck-1958, Reilly-1966, Warne-1966, Gutik-2018}.

 \begin{remark}\label{remark-10}
Легко бачити, що біциклічний моноїд ${\mathscr{C}}(p,q)$ ізоморфний напівгрупі, заданій на множині $\boldsymbol{B}_{\omega}=\omega\times\omega$ з напівгруповою операцією
\begin{align*}
  (i_1,j_1)\cdot(i_2,j_2)&=(i_1+i_2-\min\{j_1,i_2\},j_1+j_2-\min\{j_1,i_2\})=\\
  &=
\left\{
  \begin{array}{ll}
    (i_1-j_1+i_2,j_2), & \hbox{якщо~} j_1\leqslant i_2;\\
    (i_1,j_1-i_2+j_2), & \hbox{якщо~} j_1\geqslant i_2.
  \end{array}
\right.
\end{align*}
\end{remark}

Нагадаємо~\cite{Fuchs1963}, що \emph{частково впорядкована група} це група $(G,\cdot)$ на якій визначено частковий порядок ``$\leqslant$'', що є інваріантним стосовно зсувів, тобто ``$\leqslant$'' задовольняє таку властивість: для всіх $a, b, g\in G$ з $a\leqslant b$ випливає, що $a\cdot g\leqslant b\cdot g$ і $g\cdot a\leqslant g\cdot b$.

\smallskip

Далі в тексті через $e$ ми будемо позначати одиничний (нейтральний) елемент групи $G$. Підмножина $G^+=\{x\in G\mid e\leqslant x\}$ частково впорядкованої групи  $G$ називається \emph{додатним конусом} в $G$ та задовольняє такі властивості:
\begin{itemize}
  \item[1)] $G^+\cdot G^+\subseteq G^+$;
  \item[2)] $G^+\cap (G^+)^{-1}=\{e\}$; \; і
  \item[3)] $x^{-1}\cdot G^+\cdot x\subseteq G^+$ для всіх $x\in G$.
\end{itemize}
Кожна підмножина $P$  групи $G$, яка задовольняє умови 1)--3) індукує частковий порядок на $G$ ($x\leqslant y$ тоді і лише тоді, коли $x^{-1}\cdot y\in P$), для якої $P$ є додатним конусом.

\smallskip

Будемо говорити, що алгебричний гомоморфізм  $h\colon G\to H$ частково впорядкованих груп (напівгруп) $G$ i $H$ є \emph{$o$-гомоморфізмом}, якщо відображення $h$ зберігає порядок (є ізотонним)~\cite{Fuchs1963}. Алгебричний ізоморфізм  $h\colon G\to H$ частково впорядкованих груп $G$ i $H$, який є порядковим ізоморфізмом, називається \emph{$o$-ізоморфізмом}. Якщо існує $o$-ізоморфізм $h\colon G\to H$ частково впорядкованих груп (напівгруп) $G$ i $H$, то у цьому випадку будемо говорити, що частково впорядковані групи $G$ i $H$ є \emph{$o$-ізоморфіними}. Також, $o$-гомоморфізм ($o$-ізоморфізм) $h\colon G\to G$ для частково впорядкованої групи $G$ будемо називати \emph{$o$-ендоморфізмом} (\emph{$o$-авторфізмом}).

\smallskip

\emph{Лінійно впорядкована} група -- це частково впорядкована група $G$ така, що відношення часткового порядку ``$\leqslant$'' на $G$ є лінійним~\cite{Birkhoff-1973}. Частково впорядкована група $G$ називається \emph{ґратково впорядкованою} або $l$-групою, якщо частковий порядок ``$\leqslant$'' визначає ґраткову структуру на $G$ \cite{Darnel-1995}. Очевидно, що кожна лінійно впорядкована група є ґратково впорядкованою. Лінійно впорядкована група $G$ називається \emph{архімедовою}, якщо для довільних $a,b\in G^+\setminus\{e\}$ існує таке натуральне число $n$, що $a\leqslant b^n$~\cite{Birkhoff-1973}. За теоремою Гьольдера (див. \cite[Theorem~24.16]{Darnel-1995} або \cite{Holder-1901}) кожна архімедова лінійно впорядкована група $o$-ізоморфна підгрупі адитивної групи дійсних чисел $\mathbb{R}$ із звичайним лінійним порядком.

\smallskip

Надалі в тексті ми будемо вважати, що $G$ лінійно впорядкована група.

\smallskip

Для кожного елемента $g\in G$ позначимо
\begin{equation*}
    G^+(g)=\{x\in G\colon g\leqslant x\}.
\end{equation*}
Множина $G^+(g)$ називається \emph{додатним конусом над елементом} $g$ в групі $G$.

\smallskip

Для довільних елементів $g,h\in G$ ми розглянемо часткове перетворення $\alpha_h^g\colon G\rightharpoonup G$, означене за формулою
\begin{equation*}
    (x)\alpha_h^g=x\cdot g^{-1}\cdot h, \qquad \hbox{ для } \; x\in
    G^{+}(g).
\end{equation*}
Зауважимо, що з леми~XIII.1 з \cite{Birkhoff-1973} випливає, що для так визначеного часткового відображення $\alpha_h^g\colon G\rightharpoonup G$ звуження
$\alpha_h^g\colon G^+(g)\rightarrow G^+(h)$ є бієктивним відображенням.

Позначимо
\begin{equation*}
    \mathscr{B}(G)=\{\alpha_h^g\colon G\rightharpoonup G\colon g,h\in
    G\} \, \hbox{ і } \,
    \mathscr{B}^+(G)=\{\alpha_h^g\colon G\rightharpoonup G\colon g,h\in
    G^+\},
\end{equation*}
і  на множинах  $\mathscr{B}(G)$ і $\mathscr{B}^+(G)$ визначимо операцію композиція часткових відображень. Легко бачити, що
\begin{equation}\label{formula-1.1}
\alpha_h^g\cdot \alpha^k_l=\alpha^a_b, \qquad \hbox{ де } \quad a=(h\vee k)\cdot h^{-1}\cdot g \quad \hbox{ і } \quad b=(h\vee
k)\cdot k^{-1}\cdot l,
\end{equation}
для $g,h,k,l\in G$. Отже, з властивісті 1) додатного конуса та умови~\eqref{formula-1.1} випливає, що  $\mathscr{B}(G)$ і $\mathscr{B}^+(G)$ є піднапівгрупами симетричного інверсного моноїда $\mathscr{I}_G$ над множиною $G$.

\smallskip

За твердженням~1.2 з \cite{Gutik-Pagon-Pavlyk-2011} для лінійно впорядкованої групи $G$ виконуються такі твердження:
\begin{itemize}
  \item[$(i)$] елементи $\alpha_h^g$ і $\alpha_g^h$ є взаємно інверсними в $\mathscr{B}(G)$ для всіх $g,h\in G$ (відп., в $\mathscr{B}^+(G)$ для всіх $g,h\in G^+$);

  \item[$(ii)$] елемент $\alpha_h^g$ напівгрупи $\mathscr{B}(G)$ (відп., $\mathscr{B}^+(G)$) є ідемпотентом тоді і лише тоді, коли $g=h$;

  \item[$(iii)$] $\mathscr{B}(G)$ і $\mathscr{B}^+(G)$ інверсні піднапівгрупи в  $\mathscr{I}_G$;

  \item[$(iv)$] напівгрупа $\mathscr{B}(G)$ (відп.,    $\mathscr{B}^+(G))$ ізоморфна напівгрупі, визначеній на множині $S_G=G\times G$ (відп.,   $S_G^+=G^+\times G^+$), з такою операцією:
  \begin{equation}\label{formula-1.2}
  (a,b)(c,d)=
  \left\{
    \begin{array}{ll}
      (c\cdot b^{-1}\cdot a,d), & \hbox{якщо }  \; b<c;\\
      (a,d), & \hbox{якщо } \; b=c; \\
      (a,b\cdot c^{-1}\cdot d), & \hbox{якщо } \; b>c,
    \end{array}
  \right.
  \end{equation}
  де $a,b,c,d\in G$ (відп.,  $a,b,c,d\in G^+$).
\end{itemize}

Очевидно, що:
\begin{itemize}
  \item[$(1)$] якщо група $G$ ізоморфна адитивній групі цілих чисел $(\mathbb{Z},+)$ зі звичайним лінійним порядком $\leqslant$, то напівгрупа $\mathscr{B}^+(G)$ ізоморфна біциклічному моноїду ${\mathscr{C}}(p,q)$, а напівгрупа $\mathscr{B}^+(G)$ ізоморфна розширеній біциклічній напівгрупі $\mathscr{C}_{\mathbb{Z}}$ (див. \cite{Fihel-Gutik-2011});

  \item[$(2)$] якщо група $G$ ізоморфна адитивній групі дійсних чисел $(\mathbb{R},+)$ зі звичайним лінійним порядком $\leqslant$, то напівгрупа
  $\mathscr{B}(G)$ ізоморфна напівгрупі  $B^{2}_{(-\infty,\infty)}$ (див. \cite{Korkmaz-1997, Korkmaz-2009}), а напівгрупа $\mathscr{B}^+(G)$ ізоморфна напівгрупі   $B^{1}_{[0,\infty)}$ (див.   \cite{Ahre-1981, Ahre-1983, Ahre-1984, Ahre-1986, Ahre-1989}), \; і

  \item[$(3)$] напівгрупа $\mathscr{B}^+(G)$ зоморфна напівгрупі $S(G)$, яка визначена в \cite{Fotedar-1974, Fotedar-1978}.
\end{itemize}

У \cite{Fotedar-1974, Gutik-Pagon-Pavlyk-2011} досліджується структура напівгруп $\mathscr{B}(G)$ і $\mathscr{B}^+(G)$ для лінійно впорядкованої групи $G$. Зокрема, описані відношення Ґріна на $\mathscr{B}(G)$ і $\mathscr{B}^+(G)$, їхні в'язки та доведено, що ці напівгрупи є біпростими. Також, у  \cite{Gutik-Pagon-Pavlyk-2011} доведено, що для комутативної лінійно впорядкованої групи $G$ усі нетривіальні конгруенції на напівгрупах $\mathscr{B}(G)$ і $\mathscr{B}^+(G)$ є груповими тоді і лише тоді, коли група $G$ архімедова та описано структуру групових конгруенцій на групах $\mathscr{B}(G)$ і $\mathscr{B}^+(G)$.

\smallskip

Надалі у цій статті ми ототожнюватимемо напівгрупу $\mathscr{B}^+(G)$ з напівгрупою $S_G^+$, відповідно, з напівгруповою операцією визначеною за формулою \eqref{formula-1.2}. Оскільки $G$~--- архімедова лінійно впорядкована група, то вона за теоремою Гьольдера (див. \cite[Theorem~24.16]{Darnel-1995} або \cite{Holder-1901}) комутативна, а отже, напівгрупова операція на $\mathscr{B}^+(G)$ виглядає так
\begin{equation}\label{formula-1.2-1}
  (a,b)(c,d)=
  \left\{
    \begin{array}{ll}
      (a\cdot b^{-1}\cdot c,d), & \hbox{якщо }  \; b<c;\\
      (a,d),                    & \hbox{якщо } \; b=c; \\
      (a,b\cdot c^{-1}\cdot d), & \hbox{якщо } \; b>c,
    \end{array}
  \right.
  \end{equation}
  де $a,b,c,d\in G^+$.

\smallskip

Ми доводимо, якщо $G$ i $H$~--- архімедові лінійно впорядковані групи, то кожний $o$-гомоморфізм $\widehat{\varphi}\colon G\to H$ породжує гомоморфізм моноїдів $\widetilde{\varphi}\colon \mathscr{B}^+(G)\to \mathscr{B}^+(H)$, і кожний гомоморфізм моноїдів $\widetilde{\varphi}\colon \mathscr{B}^+(G)\to \mathscr{B}^+(H)$ породжує $o$-гомоморфізм $\widehat{\varphi}\colon G\to H$. Звідси випливає, що напівгрупа $o$-ендоморфізмів архімедової лінійно впорядкованої групи $G$ ізоморфна напівгрупі ендоморфізмів її біциклічного розширення $\mathscr{B}^+(G)$.

\section{\textbf{Гомоморфізми біциклічних розширень}}\label{section-2}

\begin{lemma}\label{lemma-2.1}
Нехай $G$ i $H$~--- підгрупи лінійно впорядкованої адитивної групи дійсних чисел $\mathbb{R}$. Тоді кожний $o$-гомоморфізм $\varphi\colon G^+\to H^+$ додатного конуса $G^+$ групи $G$ в додатний конус $H^+$ групи $H$  визначається за формулою
\begin{equation*}
  (x)\varphi=r_\varphi\cdot x,
\end{equation*}
для деякого дійсного числа $r_\varphi$.
\end{lemma}

\begin{proof}
Зауважимо, якщо $(x_0)\varphi=0$ для деякого елемента $a\in G^+\setminus\{0\}$, то оскільки для довільного елемента $a>0$ з $G^+$ існує таке натуральне число $n$, що $na_0\geqslant a$, матимемо
\begin{equation*}
  0=n(x_0)\varphi=(nx_0)\varphi\geqslant (a)\varphi\geqslant 0,
\end{equation*}
тобто $(a)\varphi=0$. Звідси випливає, що ендоморфізм $\varphi\colon G^+\to G^+$ анулюючий, тобто $(x)\varphi=0\cdot x$ для всіх $x\in G^+$.

Далі будемо вважати, що ендоморфізм $\varphi\colon G^+\to G^+$ неанулюючий. Розглянемо в додатному конусі  $G^+$ два довільні різні числа $a_1$ й $a_2$. Не зменшуючи загальності можемо вважати, що $a_1<a_2$. Тоді $0<(a_1)\varphi<(a_2)\varphi$. Доведемо, що
\begin{equation*}
  \frac{(a_1)\varphi}{(a_2)\varphi}=\frac{a_1}{a_2},
\end{equation*}
звідки випливає рівність
\begin{equation*}
  \frac{(a_1)\varphi}{a_1}=\frac{(a_2)\varphi}{a_2},
\end{equation*}
яку нам і треба було довести. Припустимо, що наприклад
\begin{equation*}
  \frac{(a_1)\varphi}{(a_2)\varphi}<\frac{a_1}{a_2}.
\end{equation*}
Тоді за принципом Архімеда існує додатне таке раціональне число $\dfrac{m}{n}$, що
\begin{equation*}
  \frac{(a_1)\varphi}{(a_2)\varphi}<\dfrac{m}{n}<\frac{a_1}{a_2},
\end{equation*}
де $n,m\in\mathbb{\mathbb{N}}$. Звідси впливає, що $na_1>ma_2$ i $n(a_1)\varphi<m(a_2)\varphi$. Отримали протиріччя, оскільки з нерівності $na_1>ma_2$ випливає, що
\begin{equation*}
  n(a_1)\varphi=(n a_1)\varphi>(m a_2)\varphi=m(a_2)\varphi.
\end{equation*}
Припустивши, що
\begin{equation*}
  \frac{(a_1)\varphi}{(a_2)\varphi}>\frac{a_1}{a_2},
\end{equation*}
аналогічно попередньо викладеним міркуванням, отримуємо протиріччя.
З вище доведеного випливає, що існує таке дійсне число $r_\varphi$, що для всіх $x\in G^+$ виконується рівність $(x)\varphi=r_\varphi\cdot x$. Зауважимо, що $r_\varphi=\dfrac{(a_1)\varphi}{a_1}$.
\end{proof}



\begin{lemma}\label{lemma-2.3}
Нехай $G$ i $H$~--- лінійно впорядковані групи, причому $G$~--- архімедова. Тоді для кожного $o$-гомоморфізму $\varphi\colon G\to H$  виконується лише одна з умов:
\begin{enumerate}
  \item[$(i)$]  $\varphi$~--- ін'єктивне відображення;
  \item[$(ii)$] $\varphi$~--- анулюючий гомоморфізм.
\end{enumerate}
\end{lemma}

\begin{proof}
У випадку, коли $G$~--- тривіальна (одноелементна) група, то умови $(i)$ та $(ii)$ збігаються та твердження леми очевидне. Тому надалі будемо вважати, що  $G$~--- нескінченна група.

Припустимо, що $\varphi\colon G\to H$~--- неін'єктивний гомоморфізм. Тоді існують такі $x,y\in G$, що $x<y$ i $(x)\varphi=(y)\varphi$. Тоді $e=x\cdot x^{-1}<y\cdot x^{-1}$ i
\begin{equation*}
(e)\varphi=(x\cdot x^{-1})\varphi=(x)\varphi\cdot (x^{-1})\varphi=(y)\varphi\cdot (x^{-1})\varphi=(y\cdot x^{-1})\varphi,
\end{equation*}
а отже, з архімедовості лінійного порядку на групі $G$ випливає, що $(e)\varphi=(g)\varphi$ для довільного $g\in G^+$. Оскільки $G$~--- лінійно впорядкована група, то для довільного елемента $g_1\in \in G^+$ існує натуральне число $n$ таке, що $g_1\leqslant g^n$. Звідси випливає, що
\begin{equation*}
(e)\varphi\leqslant(g_1)\varphi\leqslant(g^n)\varphi=((g)\varphi)^n=((e)\varphi)^n=(e)\varphi,
\end{equation*}
а отже, $\varphi$~--- анулюючий гомоморфізм.
\end{proof}

Для довільної лінійно впорядкованої групи $G$ означимо
\begin{equation*}
  \mathscr{B}^+_{\rightarrow}(G)=\left\{(e,g)\colon g\in G^+\right\} \qquad \hbox{i} \qquad \mathscr{B}^+_{\downarrow}(G)=\left\{(g, e)\colon g\in G^+\right\}.
\end{equation*}
Для довільного елемента $g\in G^+$ маємо, що
\begin{align*}
  \mathscr{B}^+(G)\cdot(e,g)& =\left\{(x,y)\cdot(e,g)\colon x,y\in G^+\right\}= \\
   &=\left\{(x,y\cdot g)\colon x,y\in G^+\right\}=\\
   &=G^+\times (G^+\cdot g)
\end{align*}
i
\begin{align*}
  (g,e)\cdot\mathscr{B}^+(G)&=\left\{(g,e)\cdot(x,y)\colon x,y\in G^+\right\}=  \\
  &=\left\{(g\cdot x,y)\colon x,y\in G^+\right\}=  \\
  &=(g\cdot G^+)\times G^+.
\end{align*}
Оскільки за теоремою Вагнера--Престона (див. \cite[Theorem~1.17]{Clifford-Preston-1961}) кожен головний лівий  ідеал і кожен головний правий  ідеал в інверсній напівгрупі породжується єдиним ідемпотентом, то з лінійності природного часткового порядку на напівґратці ідемпотентів напівгрупи $\mathscr{B}^+(G)$ (див. \cite[Proposition~2.1$(i)$]{Gutik-Pagon-Pavlyk-2011}) випливає таке тверд\-ження.

\begin{proposition}\label{proposition-2.4}
Нехай $G$ ~--- лінійно впорядкована група. Для $g_1,g_2\in G^+$ такі умови еквівалентні:
\begin{enumerate}
  \item[$(i)$] $(e,g_1)\le_{\mathscr{L}}^*(e,g_2)$;
  \item[$(ii)$] $(g_1,e)\le_{\mathscr{R}}^*(g_2,e)$;
  \item[$(iii)$] $(g_2,g_2)\preccurlyeq(g_1,g_1)$;
  \item[$(iv)$] $g_1\leqslant g_2$,
\end{enumerate}
а отже, $\left(\mathscr{B}^+_{\rightarrow}(G),\le_{\mathscr{L}}^*\right)$ i $\left(\mathscr{B}^+_{\downarrow}(G),\le_{\mathscr{R}}^*\right)$~--- лінійно впорядковані множини.
\end{proposition}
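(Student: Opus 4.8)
\smallskip

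The plan is to show that each of conditions $(i)$, $(ii)$, $(iii)$ is equivalent to $(iv)$; the pairwise equivalences and the final sentence then follow immediately, and everything reduces to bookkeeping with the multiplication rule \eqref{formula-1.2}.

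First I would note that $G^{+}\cdot g=g\cdot G^{+}=G^{+}(g)$ for every $g\in G^{+}$ (in a totally ordered group $x\geqslant g$ is equivalent to each of $xg^{-1}\geqslant e$ and $g^{-1}x\geqslant e$). Since $\mathscr{B}^{+}(G)$ is a monoid with identity $(e,e)$, the two displays computed just before the statement become $S^{1}(e,g)=G^{+}\times G^{+}(g)$ and $(g,e)S^{1}=G^{+}(g)\times G^{+}$ with $S=\mathscr{B}^{+}(G)$. Then $(e,g_{1})\le_{\mathscr{L}}^{*}(e,g_{2})$ unwinds to $S^{1}(e,g_{2})\subseteq S^{1}(e,g_{1})$, i.e.\ $G^{+}(g_{2})\subseteq G^{+}(g_{1})$, and this inclusion holds exactly when $g_{1}\leqslant g_{2}$ (for the non-obvious implication apply the inclusion to $g_{2}\in G^{+}(g_{2})$). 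This is $(i)\Leftrightarrow(iv)$, and $(ii)\Leftrightarrow(iv)$ is obtained in the same way from $(g,e)S^{1}=G^{+}(g)\times G^{+}$.

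For $(iii)\Leftrightarrow(iv)$ I would use that the idempotents of $\mathscr{B}^{+}(G)$ are precisely the diagonal elements $(g,g)$, $g\in G^{+}$, and that on $E(\mathscr{B}^{+}(G))$ the order $\preccurlyeq$ is the natural order of idempotents, so $(g_{2},g_{2})\preccurlyeq(g_{1},g_{1})$ means $(g_{1},g_{1})(g_{2},g_{2})=(g_{2},g_{2})(g_{1},g_{1})=(g_{2},g_{2})$. A direct computation from \eqref{formula-1.2} gives $(g_{1},g_{1})(g_{2},g_{2})=(g_{2},g_{2})(g_{1},g_{1})=(\max\{g_{1},g_{2}\},\max\{g_{1},g_{2}\})$, so these equalities hold iff $\max\{g_{1},g_{2}\}=g_{2}$, i.e.\ iff $g_{1}\leqslant g_{2}$.

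Once the four conditions are equivalent, the concluding assertion is routine. The bijection $g\mapsto(e,g)$ of $G^{+}$ onto $\mathscr{B}^{+}_{\rightarrow}(G)$ transforms $\leqslant$ into $\le_{\mathscr{L}}^{*}$ by $(iv)\Leftrightarrow(i)$, and the bijection $g\mapsto(g,e)$ of $G^{+}$ onto $\mathscr{B}^{+}_{\downarrow}(G)$ transforms $\leqslant$ into $\le_{\mathscr{R}}^{*}$ by $(iv)\Leftrightarrow(ii)$, so both are order isomorphisms; as $G$ is totally ordered, $(G^{+},\leqslant)$ is a chain and hence so are $\left(\mathscr{B}^{+}_{\rightarrow}(G),\le_{\mathscr{L}}^{*}\right)$ and $\left(\mathscr{B}^{+}_{\downarrow}(G),\le_{\mathscr{R}}^{*}\right)$. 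No step is genuinely difficult; the one place to be careful is the orientation bookkeeping in $(i)$ and $(ii)$: passing to the dual order $\le^{*}$ reverses inclusion of principal ideals while $g\mapsto G^{+}(g)$ is itself order-reversing, and one must check that these two reversals cancel, so that the outcome is $g_{1}\leqslant g_{2}$ rather than $g_{2}\leqslant g_{1}$.
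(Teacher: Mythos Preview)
Your argument is correct and essentially follows the route the paper indicates: the proposition is not given a separate proof in the paper but is stated as an immediate consequence of the preceding computations of $\mathscr{B}^{+}(G)\cdot(e,g)$ and $(g,e)\cdot\mathscr{B}^{+}(G)$, the Wagner--Preston theorem on principal one-sided ideals in inverse semigroups, and the description of the natural partial order from \cite[Proposition~2.1$(i)$]{Gutik-Pagon-Pavlyk-2011}. You have simply unpacked those references into explicit verifications, computing the idempotent product $(g_{1},g_{1})(g_{2},g_{2})=(\max\{g_{1},g_{2}\},\max\{g_{1},g_{2}\})$ directly rather than quoting the cited proposition; the substance is the same.
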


\begin{lemma}\label{lemma-2.5}
Нехай $G$ ~--- лінійно впорядкована група. Тоді підмножина $\mathscr{B}^+_{\rightarrow}(G)$ $(\mathscr{B}^+_{\downarrow}(G))$
з індукованою напівгруповою операцією з $\mathscr{B}^+(G)$ і лінійним порядком $\le_{\mathscr{L}}^*$ $(\le_{\mathscr{R}}^*)$ $o$-ізоморфна додатному конусу $G^+$ з індукованою напівгруповою операцією з групи $G$.
\end{lemma}

\begin{proof}
Очевидно, що відображення $\iota_{\rightarrow}\colon G^+\to \mathscr{B}^+_{\rightarrow}(G)$ означене за формулою $(g)\iota_{\rightarrow}=(e,g)$ є $o$-ізоморфізмом. Справді, легко бачити, що $\iota_{\rightarrow}$ --- бієктивне відображення, і для довільних $g_1,g_2\in G^+$ маємо, що
\begin{equation*}
\begin{split}
  (g_1)\iota_{\rightarrow}\cdot(g_2)\iota_{\rightarrow}&=(e,g_1)\cdot(e,g_2)= \\
    & =(e,g_1\cdot e^{-1}\cdot g_2)=\\
    & =(e,g_1\cdot g_2)=\\
    & =(g_1\cdot g_2)\iota_{\rightarrow},
\end{split}
\end{equation*}
оскільки $e\leqslant g_1$ у $G^+$. Далі скористаємося твердженням~\ref{proposition-2.4}.

Аналогічно доводиться, що відображення $\iota_{\downarrow}\colon G^+\to \mathscr{B}^+_{\downarrow}(G)$ означене за формулою $(g)\iota_{\downarrow}=(g,e)$ є $o$-ізоморфізмом.
\end{proof}

\begin{lemma}\label{lemma-2.6}
Нехай $G$ i $H$~--- архімедові лінійно впорядковані групи. Тоді кожний $o$-гомоморфізм $\varphi\colon G\to H$ породжує гомоморфізм моноїдів $\widetilde{\varphi}\colon \mathscr{B}^+(G)\to \mathscr{B}^+(H)$, який визначається за формулою
\begin{equation*}
 (x,y)\widetilde{\varphi}=\big((x)\varphi,(y)\varphi\big), \qquad \hbox{для всіх} \quad x,y\in G^+.
\end{equation*}
\end{lemma}

\begin{proof}
У випадку, якщо $\varphi\colon G\to H$ --- анулюючий гомоморфізм, то твердження леми оче\-вид\-не.

За\-фік\-суємо довільні $x_1,x_2,y_1,y_2\in G^+$. Тоді
\begin{align*}
  \big((x_1,y_1)\cdot(x_2,y_2)\big)\widetilde{\varphi}&=
    \left\{
      \begin{array}{ll}
        \big(x_1\cdot y_1^{-1}\cdot x_2,y_2\big)\widetilde{\varphi}, & \hbox{якщо~} y_1<x_2;\\
        \big(x_1,y_2\big)\widetilde{\varphi},                        & \hbox{якщо~} y_1=x_2;\\
        \big(x_1,y_1\cdot x_2^{-1}\cdot y_2\big)\widetilde{\varphi}, & \hbox{якщо~} y_1>x_2
      \end{array}
    \right.
    \\
   &=
    \left\{
      \begin{array}{ll}
        \big((x_1\cdot y_1^{-1}\cdot x_2)\varphi,(y_2)\varphi\big), & \hbox{якщо~} y_1<x_2;\\
        \big((x_1)\varphi,(y_2)\varphi\big),                        & \hbox{якщо~} y_1=x_2;\\
        \big((x_1)\varphi,(y_1\cdot x_2^{-1}\cdot y_2)\varphi\big), & \hbox{якщо~} y_1>x_2
      \end{array}
    \right.
\end{align*}
i, оскільки $\varphi\colon G\to H$ --- $o$-гомоморфізм лінійно впорядкованих груп, то за лемою~\ref{lemma-2.3} маємо, що
\begin{align*}
  (x_1,y_1)\widetilde{\varphi}\cdot(x_2,y_2)\widetilde{\varphi}&=\big((x_1)\varphi,(y_2)\varphi\big)\cdot\big((x_1)\varphi,(y_2)\varphi\big)= \\
   &=
    \left\{
      \begin{array}{ll}
        \big((x_1)\varphi\cdot((y_1)\varphi)^{-1}\cdot(x_2)\varphi,(y_2)\varphi\big), & \hbox{якщо~} (y_1)\varphi<(x_2)\varphi;\\
        \big((x_1)\varphi,(y_2)\varphi\big),                                          & \hbox{якщо~} (y_1)\varphi=(x_2)\varphi;\\
        \big((x_1)\varphi,(y_1)\varphi\cdot((x_2)\varphi)^{-1}\cdot(y_2)\varphi\big), & \hbox{якщо~} (y_1)\varphi>(x_2)\varphi
      \end{array}
    \right.\\
   &=
    \left\{
      \begin{array}{ll}
        \big((x_1)\varphi\cdot((y_1)\varphi)^{-1}\cdot(x_2)\varphi,(y_2)\varphi\big), & \hbox{якщо~} y_1<x_2;\\
        \big((x_1)\varphi,(y_2)\varphi\big),                                          & \hbox{якщо~} y_1=x_2;\\
        \big((x_1)\varphi,(y_1)\varphi\cdot((x_2)\varphi)^{-1}\cdot(y_2)\varphi\big), & \hbox{якщо~} y_1>x_2
      \end{array}
    \right. \\
   &=
    \left\{
      \begin{array}{ll}
        \big((x_1)\varphi\cdot(y_1^{-1})\varphi\cdot(x_2)\varphi,(y_2)\varphi\big), & \hbox{якщо~} y_1<x_2;\\
        \big((x_1)\varphi,(y_2)\varphi\big),                                        & \hbox{якщо~} y_1=x_2;\\
        \big((x_1)\varphi,(y_1)\varphi\cdot(x_2^{-1}\varphi)\cdot(y_2)\varphi\big), & \hbox{якщо~} y_1>x_2
      \end{array}
    \right.\\
   &=
    \left\{
      \begin{array}{ll}
        \big((x_1\cdot y_1^{-1}\cdot x_2)\varphi,(y_2)\varphi\big), & \hbox{якщо~} y_1<x_2;\\
        \big((x_1)\varphi,(y_2)\varphi\big),                        & \hbox{якщо~} y_1=x_2;\\
        \big((x_1)\varphi,(y_1\cdot x_2^{-1}\cdot y_2)\varphi\big), & \hbox{якщо~} y_1>x_2.
      \end{array}
    \right.
\end{align*}
Отже, так означене $\widetilde{\varphi}\colon \mathscr{B}^+(G)\to \mathscr{B}^+(H)$ є $o$-гомоморфізмом моноїдів.
\end{proof}

\begin{lemma}\label{lemma-2.7}
Нехай $G$ i $H$~--- архімедові лінійно впорядковані групи. Тоді кожний гомоморфізм моноїдів $\widetilde{\varphi}\colon \mathscr{B}^+(G)\to \mathscr{B}^+(H)$ породжує $o$-гомоморфізм $\varphi\colon G^+\to H^+$ такий, що
\begin{equation}\label{eq-2.1}
 (x,y)\widetilde{\varphi}=\big((x)\varphi,(y)\varphi\big), \qquad \hbox{для всіх} \quad (x,y)\in \mathscr{B}^+(G).
\end{equation}
\end{lemma}

\begin{proof}
За теоремою Гьольдера (див. \cite[Theorem~24.16]{Darnel-1995} або \cite{Holder-1901}), не зменшуючи загальності, можемо вважати, що $G$ i $H$~--- підгрупи адитивної групи дійсних чисел $\mathbb{R}$ із звичайним лінійним порядком.

Якщо гомоморфізм моноїдів $\widetilde{\varphi}\colon \mathscr{B}^+(G)\to \mathscr{B}^+(H)$ анулюючий, то $(x,y)\widetilde{\varphi}=(0,0)$ для всіх $(x,y)\in \mathscr{B}^+(G)$. Отже, $(g)\varphi=0$ для всіх $g\in G^+$, звідки випливає, що $(g)\varphi=0$ для довільного елемента $g\in G$.

Припустимо, що гомоморфізм моноїдів $\widetilde{\varphi}\colon \mathscr{B}^+(G)\to \mathscr{B}^+(H)$ неанулюючий та неін'єктивний. За теоремою 3.2 з \cite{Gutik-Pagon-Pavlyk-2011} кожна неодинична конгруенція на напівгрупі $\mathscr{B}^+(G)$ є груповою. Отже, образ $(\mathscr{B}^+(G))\widetilde{\varphi}$ є пігрупою напівгрупи $\mathscr{B}^+(H)$. Оскільки за твердженням 2.1$(v)$~\cite{Gutik-Pagon-Pavlyk-2011} усі $\mathscr{H}$-класи в напівгрупі $\mathscr{B}^+(H)$ одноелементні, то  $(\mathscr{B}^+(G))\widetilde{\varphi}$ --- одноелементна підмножина в $\mathscr{B}^+(H)$, тобто $\widetilde{\varphi}\colon \mathscr{B}^+(G)\to \mathscr{B}^+(H)$ --- анулюючий гомоморфізм, протиріччя. З отриманого протиріччя випливає, що
$\widetilde{\varphi}\colon \mathscr{B}^+(G)\to \mathscr{B}^+(H)$ --- ін'єктивний гомоморфізм.

Означимо $o$-гомоморфізм $\varphi\colon G^+\to H^+$.

Оскільки $\widetilde{\varphi}\colon \mathscr{B}^+(G)\to \mathscr{B}^+(H)$~--- гомоморфізм моноїдів, то
\begin{equation*}
(0,0)\widetilde{\varphi}=(0,0)=((0)\varphi,(0)\varphi),
\end{equation*}
а отже, $(0)\varphi=0$.

Для довільного $g\in G^+$ приймемо $(0,g)\widetilde{\varphi}=(0,(g)\varphi)$. Далі доведемо, що так визначене відображення $\varphi\colon G^+\to H^+$ є $o$-гомоморфізмом, який задовольняє умову \eqref{eq-2.1}. Оскільки $(0,g)\widetilde{\varphi}\in \mathscr{B}^+(H)$, то за означенням напівгрупи $\mathscr{B}^+(H)$ маємо, що $(g)\varphi\in H^+$ довільного $g\in G^+$. Тоді для довільних $g_1,g_2\in G^+$ з рівності
\begin{equation*}
  (0,g_1)\cdot(0,g_1)=(0,g_1+g_2),
\end{equation*}
випливає, що
\begin{align*}
  ((0,g_1)\cdot(0,g_1))\widetilde{\varphi}& =(0,g_1+g_2)\widetilde{\varphi}= \\
   &=((0)\varphi,(g_1+g_2)\varphi)=\\
   &=(0,(g_1+g_2)\varphi)
\end{align*}
i
\begin{align*}
  ((0,g_1)\cdot(0,g_1))\widetilde{\varphi}&= (0,g_1)\widetilde{\varphi}\cdot(0,g_1)\widetilde{\varphi}=\\
   & =((0)\varphi,(g_1)\varphi)\cdot((0)\varphi,(g_2)\varphi)=\\
   & =(0,(g_1)\varphi)\cdot(0,(g_2)\varphi)=\\
   & =(0,(g_1)\varphi+(g_2)\varphi).
\end{align*}
Отже, так означене відображення $\varphi\colon G^+\to H^+$ є гомоморфізмом. З леми~\ref{lemma-2.5} випливає, що ${\varphi}$~--- $o$-го\-мо\-морфізм з додатного конуса $G^+$ в додатний конус $H^+$.

Оскільки $\widetilde{\varphi}\colon \mathscr{B}^+(G)\to \mathscr{B}^+(H)$ --- гомоморфізм інверсних напівгруп, то з тверд\-ження~1.4.21 з \cite{Lawson-1998} випливає, що
\begin{align*}
  (g,0)\widetilde{\varphi}&=((0,g)^{-1})\widetilde{\varphi}= \\
   & =((0,g)\widetilde{\varphi})^{-1}= \\
   & =(0,(g)\varphi)^{-1}=\\
   & =((g)\varphi,0)
\end{align*}
довільного елемента $g\in G^+$. Отже,
\begin{align*}
  (x,y)\widetilde{\varphi}&=((x,0)\cdot(0,y)\widetilde{\varphi}= \\
   & =(x,0)\widetilde{\varphi}\cdot(0,y)\widetilde{\varphi}= \\
   & =((x)\varphi,0)\cdot(0,(x)\varphi)= \\
   & =((x)\varphi,(x)\varphi)
\end{align*}
для довільних $x,y\in G^+$, звідки випливає, що формула \eqref{eq-2.1} коректно визначає $o$-го\-мо\-морфізм $\varphi\colon G^+\to H^+$.
\end{proof}

Підсумуємо отримані результати в такій теоремі.

\begin{theorem}\label{theorem-2.8}
Нехай $G$ i $H$~--- архімедові лінійно впорядковані групи. Кожний $o$-го\-моморфізм $\widehat{\varphi}\colon G\to H$ породжує гомоморфізм моноїдів $\widetilde{\varphi}\colon \mathscr{B}^+(G)\to \mathscr{B}^+(H)$, і кожний гомоморфізм моноїдів $\widetilde{\varphi}\colon \mathscr{B}^+(G)\to \mathscr{B}^+(H)$ породжує $o$-гомоморфізм $\widehat{\varphi}\colon G\to H$, які узгоджуються за формулою
\begin{equation}\label{eq-2.2}
 (x,y)\widetilde{\varphi}=\big((x)\widehat{\varphi},(y)\widehat{\varphi}\big), \qquad x,y\in G^+.
\end{equation}
\end{theorem}

\begin{proof}
Перше твердження теореми виливає з леми~\ref{lemma-2.6}.

Доведемо друге твердження. За лемою~\ref{lemma-2.7} кожний гомоморфізм моноїдів $\widetilde{\varphi}\colon \mathscr{B}^+(G)\to \mathscr{B}^+(H)$ породжує $o$-гомоморфізм $\varphi\colon G^+\to H^+$, який задовольняє умову \eqref{eq-2.1}.
За твердженням 5.6 з \cite{Darnel-1995} $o$-гомоморфізм $\varphi$ продовжується до єдиного $o$-гомоморфізму лінійно впорядкованих груп $\widehat{\varphi}\colon G\to H$ такого, що $(g)\widehat{\varphi}=(g)\varphi$ для всіх $g\in G^+$.
\end{proof}

З теореми \ref{theorem-2.8} випливає такий наслідок.

\begin{corollary}
Архімедові лінійно впорядковані групи $G$ i $H$ $o$-ізоморфні тоді і лише тоді, коли моноїди $\mathscr{B}^+(G)$ i $\mathscr{B}^+(H)$ ізоморфні. Більше того, кожний $o$-ізоморфізм $\widehat{\varphi}\colon G\to H$ породжує ізоморфізм моноїдів $\widetilde{\varphi}\colon \mathscr{B}^+(G)\to \mathscr{B}^+(H)$, і кожний ізоморфізм моноїдів $\widetilde{\varphi}\colon \mathscr{B}^+(G)\to \mathscr{B}^+(H)$ породжує $o$-ізоморфізм $\widehat{\varphi}\colon G\to H$, які узгоджуються за формулою \eqref{eq-2.2}.
\end{corollary}

З теореми~\ref{theorem-2.8} випливає, що відображення $\Phi_{\mathscr{B}}$ з напівгрупи $\mathbf{End}^o(G)$ $o$-ендо\-морфізмів архімедової лінійно впорядкованої групи $G$ у напівгрупу $\mathbf{End}(\mathscr{B}^+(G))$ ендоморфізмів її біциклічного розширення $\mathscr{B}^+(G)$, визначене за формулою $(\varphi)\Phi_{\mathscr{B}}=\widetilde{\varphi}$, є ізоморфізмом. Отже, виконується така теорема

\begin{theorem}\label{theorem-2.9}
Нехай $G$ архімедова лінійно впорядкована група. Тоді напівгрупи $\mathbf{End}^o(G)$ та  $\mathbf{End}(\mathscr{B}^+(G))$ ізоморфні.
\end{theorem}

\begin{corollary}
Нехай $G$ архімедова лінійно впорядкована група. Тоді група $o$-автомофізмів групи $G$ ізоморфна групі автоморфізмів моноїда $\mathscr{B}^+(G)$.
\end{corollary}
\section{\textbf{Про категорії $\mathfrak{TOAG}$ i $\mathfrak{BETOAG}$}}\label{section-3}

Означимо категорію $\mathfrak{TOAG}$ так:
\begin{enumerate}
  \item $\mathbf{Ob}(\mathfrak{TOAG})=\left\{G\colon G~- \hbox{архімедова лінійно впорядкована група}\right\}$;
  \item $\mathbf{Mor}(\mathfrak{TOAG})$ --- $o$-гомоморфізми архімедових лінійно впорядкованих груп,
\end{enumerate}
а категорію $\mathfrak{BETOAG}$ так:
\begin{enumerate}
  \item $\mathbf{Ob}(\mathfrak{BETOAG})$ біциклічні розширення $\mathscr{B}^+(G)$ архімедових лінійно впорядкованих груп $G\in \mathbf{Ob}(\mathfrak{TOAG})$;
  \item $\mathbf{Mor}(\mathfrak{BETOAG})$ є гомоморфізми моноїдів $\mathscr{B}^+(G)$.
\end{enumerate}

Для кожного об'єкту $G\in \mathbf{Ob}(\mathfrak{TOAG})$ означимо $\mathbf{B}(G)=\mathscr{B}^+(G)$ --- біциклічне розширення ар\-хі\-ме\-до\-вої лінійно впорядкованої групи $G$. Для кожного морфізму $\varphi\colon G\to H$ з $\mathbf{Mor}(\mathfrak{TOAG})$ означимо $\mathbf{B}(\varphi)=\widetilde{\varphi}$, де гомоморфізм моноїдів $\widetilde{\varphi}\colon \mathscr{B}^+(G)\to \mathscr{B}^+(H)$, що визначається за формулою
\begin{equation}\label{eq-3.1}
 (x,y)\widetilde{\varphi}=\big((x)\varphi,(y)\varphi\big), \qquad \hbox{для всіх} \quad x,y\in G^+.
\end{equation}

З леми~\ref{lemma-2.6} випливає, що $\mathbf{B}$ є функтором з категорії $\mathfrak{TOAG}$ у категорію $\mathfrak{BETOAG}$.

Для кожного об'єкту $\mathscr{B}^+(G)\in \mathbf{Ob}(\mathfrak{BETOAG})$ означимо $\mathbf{T}(\mathbf{Е}(\mathscr{B}^+(G)))=G$ --- така ар\-хі\-ме\-до\-ва лінійно впорядкована група, що $\mathscr{B}^+(G)$ є біциклічним розширенням лінійно впорядкованої групи $G$. Для кожного морфізму $\widetilde{\varphi}\colon \mathscr{B}^+(G)\to \mathscr{B}^+(H)$ з $\mathbf{Mor}(\mathfrak{BETOAG})$ означимо $\mathbf{T}(\widetilde{\varphi})={\varphi}$, де $\varphi\colon G\to H$ $o$-гомоморфізми архімедових лінійно впорядкованих груп $G$ i $H$, який за теоремою~\ref{theorem-2.8} визначається за формулою \eqref{eq-3.1}.

Функтор $\mathbf{I}$ з категорії $\mathfrak{C}$ в категорію $\mathfrak{K}$ називається \emph{ізоморфізмом} категорій $\mathfrak{C}$ і $\mathfrak{K}$, якщо існує функтор $\mathbf{J}\colon \mathfrak{K}\to \mathfrak{C}$, для якого обидві композиції $\mathbf{I}\circ\mathbf{J}\colon \mathfrak{C}\to \mathfrak{C}$ i $\mathbf{J}\circ\mathbf{I}\colon \mathfrak{K}\to \mathfrak{K}$ є тотожними функ\-то\-ра\-ми \cite{Mac_Lane-2010}.

Очевидно, що функтори $\mathbf{B}\colon \mathfrak{TOAG}\to\mathfrak{BETOAG}$ i $\mathbf{I}\colon\mathfrak{BETOAG} \to\mathfrak{TOAG}$ є взаємно оберненими, а отже справджується така теорема:

\begin{theorem}
Категорії $\mathfrak{TOAG}$ i $\mathfrak{BETOAG}$ ізоморфні.
\end{theorem}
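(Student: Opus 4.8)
The plan is to verify that the two covariant functors $\mathbf{B}\colon\mathfrak{TOAG}\to\mathfrak{BETOAG}$ and $\mathbf{I}\colon\mathfrak{BETOAG}\to\mathfrak{TOAG}$ are mutually inverse, that is, $\mathbf{I}\circ\mathbf{B}=\mathrm{Id}_{\mathfrak{TOAG}}$ and $\mathbf{B}\circ\mathbf{I}=\mathrm{Id}_{\mathfrak{BETOAG}}$ as strict equalities of functors. That $\mathbf{B}$ is a functor has already been recorded: Lemma~\ref{lemma-2.6} gives that $\mathbf{B}(\varphi)=\widetilde{\varphi}$ is a well-defined monoid homomorphism, and the defining formula~\eqref{eq-3.1} makes $\mathbf{B}(\mathrm{id}_G)=\mathrm{id}_{\mathscr{B}^+(G)}$ and $\mathbf{B}(\varphi\psi)=\mathbf{B}(\varphi)\mathbf{B}(\psi)$ immediate, since applying either of the last two maps to $(x,y)$ produces $\big(((x)\varphi)\psi,((y)\varphi)\psi\big)$. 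For $\mathbf{I}$ the cleanest route is to note that its object part ($\mathscr{B}^+(G)\mapsto G$) and its morphism part ($\widetilde{\varphi}\mapsto\widehat{\varphi}$, the $o$-homomorphism furnished by Theorem~\ref{theorem-2.8}) are, by construction, the inverse assignments to the corresponding parts of $\mathbf{B}$; a level-wise inverse of a functor is again a functor, so $\mathbf{I}$ is a functor once this inverse relationship is confirmed. Alternatively one checks functoriality of $\mathbf{I}$ by hand from~\eqref{eq-3.1}, exactly as for $\mathbf{B}$.

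So the key step is to check, on objects and on morphisms, that $\mathbf{B}$ and $\mathbf{I}$ compose to the identities. On objects this is immediate: every object of $\mathfrak{BETOAG}$ is, by its very definition, $\mathscr{B}^+(G)$ for a uniquely determined $G\in\mathbf{Ob}(\mathfrak{TOAG})$, and $\mathbf{I}(\mathscr{B}^+(G))=G$, whence $\mathbf{I}(\mathbf{B}(G))=G$ and $\mathbf{B}(\mathbf{I}(\mathscr{B}^+(G)))=\mathscr{B}^+(G)$. On morphisms, take an $o$-homomorphism $\widehat{\varphi}\colon G\to H$; then $\mathbf{B}(\widehat{\varphi})=\widetilde{\varphi}$ is the monoid homomorphism of~\eqref{eq-3.1}, and $\mathbf{I}(\widetilde{\varphi})$ is the $o$-homomorphism $G\to H$ linked to $\widetilde{\varphi}$ by~\eqref{eq-2.2}. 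Reading off $(x,0)\widetilde{\varphi}=\big((x)\widehat{\varphi},0\big)$ recovers $\widehat{\varphi}$ on the positive cone $G^+$ from $\widetilde{\varphi}$, and an $o$-homomorphism is uniquely determined by its restriction to $G^+$ (the extension result from \cite{Darnel-1995} invoked in the proof of Theorem~\ref{theorem-2.8}); hence $\mathbf{I}(\widetilde{\varphi})=\widehat{\varphi}$, i.e.\ $\mathbf{I}\circ\mathbf{B}=\mathrm{Id}$ on morphisms. Conversely, take a monoid homomorphism $\widetilde{\varphi}\colon\mathscr{B}^+(G)\to\mathscr{B}^+(H)$; then $\mathbf{I}(\widetilde{\varphi})=\widehat{\varphi}$ is the $o$-homomorphism of Theorem~\ref{theorem-2.8}, and $\mathbf{B}(\widehat{\varphi})$ is then given by the same formula~\eqref{eq-3.1}, which coincides with~\eqref{eq-2.2}, so $\mathbf{B}(\widehat{\varphi})=\widetilde{\varphi}$, i.e.\ $\mathbf{B}\circ\mathbf{I}=\mathrm{Id}$ on morphisms. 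Combining the object and morphism levels yields $\mathbf{I}\circ\mathbf{B}=\mathrm{Id}_{\mathfrak{TOAG}}$ and $\mathbf{B}\circ\mathbf{I}=\mathrm{Id}_{\mathfrak{BETOAG}}$, so $\mathfrak{TOAG}$ and $\mathfrak{BETOAG}$ are isomorphic.

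The main (and essentially only) delicate point is the uniqueness used above: one must make sure that the $o$-homomorphism recovered from $\widetilde{\varphi}$ via Theorem~\ref{theorem-2.8} is exactly the original $\widehat{\varphi}$, and not merely some $o$-homomorphism inducing the same $\widetilde{\varphi}$. This is settled by the fact that $\widehat{\varphi}|_{G^+}$ is read off from the first coordinate of $(x,0)\widetilde{\varphi}$ (equivalently, is the $o$-homomorphism $G^+\to H^+$ produced in Lemma~\ref{lemma-2.7}), together with the uniqueness of the extension of an $o$-homomorphism from $G^+$ to $G$. Everything else is the routine bookkeeping already carried out in Lemmas~\ref{lemma-2.5}--\ref{lemma-2.7} and Theorem~\ref{theorem-2.8}, so no further obstacle is expected.
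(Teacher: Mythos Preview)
Your proposal is correct and follows exactly the paper's approach: the paper simply records that $\mathbf{B}$ and $\mathbf{I}$ (denoted $\mathbf{T}$ in the paper's definitions) are mutually inverse functors and declares the theorem, relying on Lemma~\ref{lemma-2.6} and Theorem~\ref{theorem-2.8} for the substance. You have merely spelled out the routine verification that the composites are identities on objects and morphisms, including the uniqueness-of-extension point, which the paper leaves implicit in the word ``obviously.''
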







\begin{thebibliography}{10}


\bibitem{Vagner-1952}
В. В.~Вагнер,
\emph{Обощенные группы},
ДАН СССР \textbf{84} (1952), 1119--1122.

\bibitem{Gutik-Prokhorenkova-Sekh-2021}
О. Гутік, О. Прохоренкова, Д. Сех,
\emph{Про ендоморфiзми бiциклiчної напiвгрупи та розширеної бiциклiчної напiвгрупи},
Вісник Львів. ун-ту. Сер. мех.-мат. \textbf{92} (2021), 5--16.


\bibitem{Ahre-1981}
K. R. Ahre,
\emph{Locally compact bisimple inverse semigroups},
Semigroup Forum \textbf{22} (1981), no.~4, 387--389.

\bibitem{Ahre-1983}
K. R. Ahre, \emph{On the closure of $B^1_{[0,\infty)}$},
\.{I}stanbul Tek. \"{U}niv. B\"{u}l. \textbf{36} (1983), no.~4, 553--562.

\bibitem{Ahre-1984}
K. R. Ahre,
\emph{On the closure of $B^{\prime}_{[0,\infty)}$},
Semigroup Forum \textbf{28} (1984), no.~1--3, 377--378.

\bibitem{Ahre-1986}
K. R. Ahre,
\emph{On the closure of $B^{1}_{[0,\infty)}$},
Semigroup Forum \textbf{33} (1986), no.~2, 269--272.

\bibitem{Ahre-1989}
K. R. Ahre,
\emph{On the closure of $B^2_{[0,\infty)}$},
\.{I}stanbul Tek. \"{U}niv. B\"{u}l. \textbf{42} (1989), no.~3, 387--390.

\bibitem{Andersen-1952}
O. Andersen,
\emph{Ein Bericht uber die Struk\-tur abstrakter Halbgruppen},
PhD Thesis. Ham\-burg, 1952.


\bibitem{Birkhoff-1973}
G. Birkhoff,
\emph{Lattice Theory},
Colloq. Publ., 25, Amer. Math. Soc., 1973.

\bibitem{Bruck-1958}
R. H. Bruck,
\emph{A survey of binary systems},
(Erg. Math. Grenzgebiete. Neue Folge. Heft 20) Springer, Berlin-G\"{o}ttingen-Heidelberg,  1958.

\bibitem{Clifford-Preston-1961}
A.~H.~Clifford and G.~B.~Preston,
\emph{The algebraic theory of semigroups}, Vol. I,
Amer. Math. Soc. Surveys {\bf 7}, Providence, R.I.,  1961.

\bibitem{Clifford-Preston-1967}
A.~H.~Clifford and G.~B.~Preston,
\emph{The algebraic theory of semigroups}, Vol.  II,
Amer. Math. Soc. Surveys {\bf 7}, Pro\-vi\-den\-ce, R.I.,   1967.

\bibitem{Darnel-1995}
M. R. Darnel,
\emph{Theory of lattice-ordered groups},
Marcel Dekker, Inc., New York, 1995.

\bibitem{Fihel-Gutik-2011}
I. Ya. Fihel and O. V. Gutik,
\emph{On the closure of the extended bicyclic semigroup},
Carpathian Math. Publ. \textbf{3} (2011), no.~2, 131--157.

\bibitem{Fortunatov-1976}
V. A. Fortunatov,
\emph{Congruences on simple extensions of semigroups},
Semigroup Forum \textbf{13} (1976), 283--295.

\bibitem{Fotedar-1974}
G.~L.~Fotedar,
\emph{On a semigroup associated with an ordered group},
Math. Nachr. \textbf{60} (1974), 297--302.

\bibitem{Fotedar-1978}
G.~L.~Fotedar,
\emph{On a class of bisimple inverse semigroups},
Riv. Mat. Univ. Parma (4) \textbf{4} (1978), 49--53.

\bibitem{Fuchs1963}
L.~Fuchs,
\emph{Partially ordered algebraic systems},
Pergamon Press, 1963.

\bibitem{Green-1951}
J. A. Green,
\emph{On the structure of semigroups},
Ann. Math. Ser. 2 \textbf{54} (1951), no.~1, 163--172.

\bibitem{Grille-1995}
P. A. Grillet,
\emph{Semigroups. An introduction to the structure theory},
Marcel, Dekker, Inc., New-York, 1995.

\bibitem{Gutik-2018}
O. Gutik,
\emph{On locally compact semitopological $0$-bisimple inverse $\omega$-semigroups,}
Topol. Al\-geb\-ra Appl. \textbf{6} (2018), 77--101.

\bibitem{Gutik-Pagon-Pavlyk-2011}
O. Gutik, D. Pagon, and K. Pavlyk,
\emph{Congruences on bicyclic extensions of a linearly ordered group},
Acta Comment. Univ. Tartu. Math. \textbf{15} (2011), no.~2, 61--80.

\bibitem{Holder-1901}
O. H\"{o}lder,
\emph{Die Axiome der Quantitat und die Lehre vom Ma{\ss}},
Ber. Verh. Sachs. Ges. Wiss.. Leipzig Math. Phys. CI. \textbf{53} (1901), 1--64.

\bibitem{Korkmaz-1997}
R.~Korkmaz,
\emph{On the closure of $B^2_{(-\infty,+\infty)}$},
Semigroup Forum \textbf{54} (1997), no.~2, 166--174.

\bibitem{Korkmaz-2009}
R.~Korkmaz,
\emph{Dense inverse subsemigroups of a topological inverse semigroup},
Semigroup Forum \textbf{78} (2009), no.~3, 528--535.

\bibitem{Lawson-1998}
M.~Lawson,
\emph{Inverse semigroups. The theory of partial symmetries},
Singapore, World Scientific, 1998.

\bibitem{Mac_Lane-2010}
S. Mac Lane,
\emph{Categories for the working mathematician}, 2nd ed.,
New York, Springer, Graduate Texts in Math. \textbf{5}, 2010.


\bibitem{Petrich1984}
M.~Petrich,
\emph{Inverse semigroups},
John Wiley $\&$ Sons, New York, 1984.

\bibitem{Reilly-1966}
N. R. Reilly,
\emph{Bisimple $\omega$-semigroups, }
Proc. Glasgow Math. Assoc. \textbf{7} (1966), no. 3, 160--167.


\bibitem{Warne-1966}
R. J. Warne,
\emph{A class of bisimple inverse semigroups},
Pacif. J. Math. \textbf{18} (1966), no. 3, 563--577.

\bibitem{Warne-1967}
R. J. Warne,
\emph{Bisimple inverse semigroups mod groups},
Duke Math. J. \textbf{34} (1967), 787--812.

\end{thebibliography}
\end{document}